\numberwithin{equation}{section}
\theoremstyle{plain}
\newtheorem{theorem}{Theorem}[section]
\newtheorem{lemma}[theorem]{Lemma}
\newtheorem{proposition}[theorem]{Proposition}
\newtheorem{conjecture}[theorem]{Conjecture}
\theoremstyle{definition}
\theoremstyle{remark}
\newtheorem{remark}[theorem]{Remark}
\newcommand{\N}{\mathbb{N}}
\newcommand{\Z}{\mathbb{Z}}
\newcommand{\R}{\mathbb{R}}
\newcommand{\floor}[1]{{\left\lfloor #1 \right\rfloor}}
\DeclareMathOperator{\E}{\mathbf{E}}
\renewcommand{\P}{\mathbf{P}}
\newcommand{\e}{\mathbf{e}}
\renewcommand{\bar}[1]{\overline{#1}}
\renewcommand{\tilde}[1]{\widetilde{#1}}
\renewcommand{\epsilon}{\varepsilon}
\renewcommand{\phi}{\varphi}
\title{The genealogy of an exactly solvable Ornstein-Uhlenbeck type branching process with selection}
\author{Aser Cortines\thanks{\texttt{aser.cortinespeixoto@math.uzh.ch}} \\
Universit\"at Z\"urich
\and
Bastien Mallein\thanks{\texttt{mallein@math.univ-paris13.fr}} \\
LAGA, Université Paris 13
}
\date{\today}
\renewcommand{\e}{\mathrm{e}}
\newcommand{\eq}{\mathrm{eq}}
\newcommand{\egaldistr}{\overset{(d)}{=}}
\begin{document}

\maketitle

\begin{abstract}
We study the genealogy of a solvable population model with $N$ particles on the real line which evolves according to a discrete-time branching process with selection. At each time step, every particle gives birth to children around $a$ times its current position, where $a>0$ is a parameter of the model. Then, the $N$ rightmost new-born children are selected to form the next generation. We show that the genealogical trees of the process converge to those of a Beta coalescent as $N \to \infty$. The process we consider can be seen as a toy-model version of a continuous-time branching process with selection, in which particles move according to independent Ornstein-Uhlenbeck processes. The parameter $a$ is akin to the pulling strength of the Ornstein-Uhlenbeck motion.
\end{abstract}

\section{Introduction}
\label{sec:introduction}

A \textit{branching-selection particle system} is a Markov process of particles on the real line that evolves through the repeated application of the two following steps:
\begin{description}
  \item[Branching step:] each particle currently alive in the process independently gives birth to children according to a point process whose law depends on the position of the particle.
  \item[Selection step:] some of the newborn children are selected to form the next generation and reproduce at the next \textit{branching step}, while the other particles are ``discarded'' from the process.
\end{description}
From a biological perspective, such models can be thought of as toy-models for the competition between individuals in a population evolving in an environment with limited resources. In this sense, the positions of particles (also seen as individuals) may be interpreted as their \textit{fitness}: individuals with large fitness have more propensity to reproduce and transfer their genetic advantage to their offspring.

Branching-selection particle systems are also of physical relevance. They are related to noisy reaction-diffusion phenomena and to the F-KPP equation \cite{BD97, BDMM07}, hence they are often used to describe the evolution of disordered systems having two homogeneous steady states. The prototypical example of such systems is the so-called $N$-branching random walk: in this process, particles make independently children around their current position, and at each step only the $N$ rightmost children are kept alive. Based on numerical simulations and the analysis of solvable models (see \cite{BD97, BDMM07}), it has been conjectured that many branching random walks with similar selection procedure satisfy universal properties. The cloud of particles travels at a deterministic speed $v_N$ that should satisfy
\begin{equation}\label{eq_Brunet_derrida_conjecture_speed}
  v_N - v_\infty = \frac{-\chi}{(\log N + 3 \log \log N + o(\log \log N))^2}.
\end{equation}
Moreover, the genealogical trees of such models should converge to those of a Bolthausen-Sznitman coalescent.

Some of these conjectures have been verified for individual models. Bérard and Gouéré~\cite{BeG} proved that the corrections to the speed $v-v_N$ of many $N$-branching random walks are of the order $(\log N)^{-2}$, in accordance with the conjectures from \cite{BDMM07}. These results haven been extended to branching random walks with different integrability conditions in \cite{BeM,CoG,Mal15b}, and to other related models in \cite{CoC15,Mal15a,Pai16}. Maillard \cite{Mai} proved that the barycentre of a $N$-branching Brownian motion converges, after proper scaling, to a  L\'evy process. This result indicates the validity of the second order corrections in \eqref{eq_Brunet_derrida_conjecture_speed}. Other results on the hydrodynamic limit of the shape of the front were obtained in \cite{DuR11,DMFP}.

As for the genealogical structure of such models, the authors in \cite{BBS13} show that the genealogy of a branching Brownian motion with quasi-critical absorption converges toward the Bolthausen-Sznitman coalescent. To the best of our knowledge this is the only example lying on the $N$-branching random walk universality class for which the conjectures from \cite{BDMM07} about the genealogy have been verified. Nevertheless results on related models \cite{BDMM07, Cor16, CoMa2017} indicate the robustness of this conjecture. We studied in \cite{CoMa2017} the so-called \emph{exponential model} from \cite{BDMM07}, which shares many common features with $N$-branching random walks, even though it does not belong to the same universality class. We showed that when the selection procedures favours the rightmost individuals, the genealogy of the process converges toward the Bolthausen--Sznitman coalescent. 
This result lead to the natural question: under which conditions do different genealogical behaviours arise? 

It is well known that the genealogy of neutral population models, such as Wright--Fisher and Moran models, converge to the Kingman coalescent \cite{MoS01}. On the other hand, if the rightmost particles are favoured by the selection procedure, then the Bolthausen--Sznitman coalescent is expected. Hence we look for coalescent processes arising in branching-selection particle systems, which interpolate between these two behaviours. Such a family appears in the context of Galton-Watson trees \cite{Sch03}, where the so-called Beta coalescent forms a $1$-parameter family interpolating between the Kingman and the Bolthausen--Sznitman coalescent.

In this paper, we consider a variant of the exponential model from \cite{BDMM07}, in which particles are subjected to a pulling force attracting them to zero. Let $N \in \N$ denote the size of the population and $a \in \R_+$ be a positive parameter governing the intensity of the attractive force. The process is defined as follows: it starts with $N$ particles scattered on the on the real line. At each discrete time $n$, every particle gives birth to children whose position are determined by independent Poisson point processes. More precisely, the offspring of an individual located at $x \in \R$ are positioned according to a Poisson point process with intensity $\e^{a x - y} \mathrm{d}y$ on $\R$. We then select the $N$ rightmost newborn individuals to form the next generation of the process. We call this process the $(N,a)$\emph{-exponential model}.

In the coming section, we will show that the $(N,a)$-exponential model is well-defined for all $n \in \N$, and that it is a reversible Markov process. Hence we can construct a version of it for all $n \in \Z$. Thanks to this bi-infinite construction, we define the \textit{ancestral partition process} $\Pi^N$ of the process as follows: for every $n \in \N$, let $\Pi^N_n$ be the partition of $\{1,\ldots,N\}$ such that $i$ and $j$ belong to the same block if and only if the $i$th and the $j$th rightmost particles at time $0$ share a common ancestor at time $-n$. Our main result concerns the asymptotic behaviour of $\Pi^N$ as $N \to \infty$.

\begin{theorem} \label{thm:main} As $N \to \infty$, we have:
\begin{enumerate}[a)]
  \item If $0<a<1/2$, then $(\Pi^N_\floor{t N}, t \geq 0)$ converges in law to the Kingman coalescent.
  \item If $a = 1/2$, then $(\Pi^N_\floor{tN/\log N}, t \geq 0)$ converges in law to the Kingman coalescent.
  \item If $1/2<a<1$, then $(\Pi^N_\floor{tN^{(1-a)/a}}, t \geq 0)$ converges in law to the Beta$(2-a^{-1},a^{-1})$-coalescent. 
  \item If $a=1$, then $(\Pi^N_\floor{t \log N}, t \geq 0)$ converges in law to the Bolthausen-Sznitman coalescent.
  \item If $a>1$, then $\Pi^N$ converges in law toward a discrete coalescent on $\N$.
\end{enumerate}
\end{theorem}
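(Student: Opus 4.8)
The strategy is to use the exact solvability to reduce the genealogy to a Cannings-type coalescent in an explicit i.i.d.\ random environment, and then to invoke the Möhle--Sagitov criteria for convergence of Cannings models to exchangeable coalescents. Concretely: if at time $n-1$ the particles sit at $(x_k)_{k=1}^N$, the branching step produces a Poisson point process of intensity $\big(\sum_k\e^{ax_k}\big)\e^{-y}\,\d y = W_{n-1}\e^{-y}\,\d y$ on $\R$, so that after keeping the $N$ rightmost children the configuration at time $n$ is $\{\log W_{n-1}-\log\Gamma^{(n)}_k : 1\le k\le N\}$, with $\Gamma^{(n)}_k=e^{(n)}_1+\dots+e^{(n)}_k$, the $e^{(n)}_\ell$ i.i.d.\ $\mathrm{Exp}(1)$ and independent across $n$, and $W_n=W_{n-1}^a\sum_{k=1}^N(\Gamma^{(n)}_k)^{-a}$. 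The key point is that this superposition is a \emph{marked} Poisson process whose marks --- recording the parent --- are i.i.d.\ and independent of the atom positions; selecting the $N$ rightmost atoms does not disturb the marks, so that, given the driving randomness, the parent of each of the $N$ selected children is the $j$-th rightmost particle of the previous generation with probability
\[
q^{(n-1)}_j := \frac{(\Gamma^{(n-1)}_j)^{-a}}{\sum_{k=1}^N(\Gamma^{(n-1)}_k)^{-a}}, \qquad 1\le j\le N,
\]
independently over the children. As the weights $W$ cancel in this ratio, the environments $(q^{(n)}_\cdot)_{n\in\Z}$ are i.i.d.; hence $(\Pi^N_n)_{n\ge0}$ is a time-homogeneous Markov chain whose one-step transition from a state with $b$ blocks to a coarsening has a probability that is a symmetric function of $(q^{(0)}_j)_j$ depending only on the merging pattern --- the ancestral chain of an exchangeable Cannings model, with per-step pair-coalescence probability $c_N := \E\big[\sum_{j=1}^N(q^{(0)}_j)^2\big]$.

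\textbf{Step 2 (asymptotics of the environment).} Using $\Gamma_j/j\to1$ a.s.\ together with fluctuation/concentration estimates, I would first establish
\[
c_N \;\asymp\;
\begin{cases}
N^{-1} & \text{if } 0<a<1/2,\\
N^{-1}\log N & \text{if } a=1/2,\\
N^{-(1-a)/a} & \text{if } 1/2<a<1,\\
(\log N)^{-1} & \text{if } a=1,\\
\text{a positive constant} & \text{if } a>1,
\end{cases}
\]
which already yields the five time-scales of the theorem. The subtle feature is that for $a\ge1/2$ the value of $c_N$ is governed not by the typical behaviour $\Gamma_j\approx j$ but by the rare events on which some $\Gamma_j$ is abnormally small, making a single weight $q_j$ macroscopic; one must therefore split $\E[\sum_j q_j^2]$ --- and the analogous higher symmetric moments $\E[\sum_j q_j^k]$ and $\E[\sum_{i\ne j}q_i^{k_1}q_j^{k_2}]$ --- into a ``typical'' contribution and a ``one large family'' contribution and estimate each. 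Tracking the value of that single large weight gives the limiting family-size profile: parametrising through $\Gamma_1=\epsilon$ and changing variables $q_1\approx x$, the rescaled occupation measure $c_N^{-1}\,\E\big[\sum_j(q^{(0)}_j)^2\,\delta_{q^{(0)}_j}\big]$ on $[0,1]$ converges weakly to $\delta_0$ when $a\le1/2$ and to the $\mathrm{Beta}(2-a^{-1},a^{-1})$ distribution $\big(\propto x^{1-a^{-1}}(1-x)^{a^{-1}-1}\,\d x\big)$ when $1/2<a\le1$ (which is $\mathrm{Unif}[0,1]$ for $a=1$); one also checks that the event ``two large families in the same generation'' has probability $o(c_N)$.

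\textbf{Step 3 (conclusion).} Parts (a)--(d) now follow from the Möhle--Sagitov classification. When $\E[\sum_j q_j^3]=o(c_N)$ and $\E[\sum_{i\ne j}q_i^2q_j^2]=o(c_N)$, which Step 2 gives for $a\le1/2$, the rescaled chain $(\Pi^N_{\floor{t/c_N}})_{t\ge0}$ converges to the Kingman coalescent, hence --- inserting the value of $c_N$ --- to the stated limit on scale $N$ (resp.\ $N/\log N$). When instead $c_N^{-1}\,\E[\sum_j q_j^2\,g(q_j)]\to\int_0^1 g\,\d\Lambda_a$ for continuous $g$ and the simultaneous-merger terms are $o(c_N)$, the rescaled chain converges to the $\Lambda_a$-coalescent, i.e.\ to the $\mathrm{Beta}(2-a^{-1},a^{-1})$-coalescent for $1/2<a<1$ (scale $N^{(1-a)/a}$) and to the Bolthausen--Sznitman coalescent for $a=1$ (scale $\log N$), the constant in $c_N$ pinning down the time-scaling. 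For part (e), couple all the models on the same array $(e^{(n)}_\ell)$: since $a>1$ the series $\sum_{k\ge1}(\Gamma^{(n)}_k)^{-a}$ converges, so $q^{(n)}_j\to(\Gamma^{(n)}_j)^{-a}/\sum_{k\ge1}(\Gamma^{(n)}_k)^{-a}$ almost surely as $N\to\infty$, and the discrete-time chain $\Pi^N$ converges, with no rescaling, in finite-dimensional distributions to the coalescent on partitions of $\N$ in which at each step the current blocks independently pick a parent in $\N$ according to this limiting (random) environment.

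\textbf{Main obstacle.} The heart of the proof is Step 2 for $a\ge1/2$: since the naive estimate $\Gamma_j\approx j$ gives the wrong order for $c_N$, one must carry out a careful rare-event decomposition, and --- for $1/2\le a\le1$ --- show that the rescaled law of the single macroscopic weight converges \emph{exactly} to the $\mathrm{Beta}(2-a^{-1},a^{-1})$ distribution while multiple macroscopic families remain negligible, so that the right $\Lambda$-coalescent (rather than a $\Xi$-coalescent) emerges. The remainder is the exact-solvability bookkeeping of Step 1 and a black-box use of the coalescent limit theorems.
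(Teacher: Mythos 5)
Your outline is correct and follows the same structural route as the paper: reduce the genealogy to the ancestral partition process of a Cannings model in an i.i.d.\ random environment (your Step 1 is exactly the content of Lemma~\ref{lem:stationary} and the bi-infinite construction), then classify the limit via the moment conditions of M\"ohle--Sagitov. The difference is in how the hard part is discharged. The paper does not carry out your Steps 2--3 at all: it first applies an exchangeability identity (\cite[Proposition 4.2]{CoMa2017}) to replace the weights built from the \emph{ordered, dependent} atoms $\Gamma_1<\dots<\Gamma_N$ of the Poisson process by weights proportional to genuinely i.i.d.\ variables $\e^{aE_j}$ with the exact Pareto tail $\P(\e^{aE_j}\ge y)=y^{-1/a}$, and then invokes \cite[Theorem 1.2]{Cor16}, which is precisely the statement that a Cannings model with weights given by normalized i.i.d.\ regularly varying variables of index $\alpha=1/a$ has ancestral process converging to Kingman ($\alpha>2$ and $\alpha=2$, on scales $N$ and $N/\log N$), Beta$(2-\alpha,\alpha)$ ($1<\alpha<2$, scale $N^{\alpha-1}$), Bolthausen--Sznitman ($\alpha=1$, scale $\log N$), or a discrete coalescent ($\alpha<1$). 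Your rare-event decomposition of $\E[\sum_j q_j^2]$ and of the higher symmetric moments is essentially a re-derivation of that cited theorem; it is the genuinely technical part of the argument, and your identification of the time scales and of the limiting $\Lambda$-measures is consistent with it. The lesson from the paper's proof is that passing to the normalized-i.i.d.\ representation first makes the ``single macroscopic family'' analysis a standard heavy-tail computation (maximum versus sum of i.i.d.\ Pareto variables) rather than an analysis of the dependent $\Gamma_j$'s, and lets the entire classification be quoted as a black box.
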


We refer to \cite[Example 3, p72]{Ber} for the precise definitions of the limiting processes, and Section~2.2 of the same book for the topology in which the above convergence takes place.

In view of the above result, $a=1$ marks a phases transition in the coalescent behaviour. We can also observe this transition in the dynamical behaviour of the cloud of particles. As long as $a < 1$, the cloud of particles remains within finite distance from 0. When $a=1$, we proved in \cite{CoMa2017} that it drifts toward~$\infty$ at positive speed. Finally, one can easily show that when  $a > 1$ the cloud moves away from $0$ at exponential rate. For $a < 1$, we have the following more precise estimates on the extremal positions of the cloud of particles.

\begin{proposition}
\label{prop_assymptotic_speed}
Given $a \in (0,1)$, we denote by $M_n$ and $m_n$ the largest and smallest positions at time $n$ in the $(N,a)$-exponential model. Then, if the initial conditions are bounded in $\mathrm{L}^1$ uniformly in $N$, we have
\[
  \lim_{N \to \infty }  \lim_{n \to \infty} (\E(M_n)-\log N) = \gamma - \frac{\log (1-a)}{1-a} \quad \text{and} \quad \lim_{N \to \infty} \lim_{n \to \infty} \E(m_n) = - \frac{\log (1-a)}{1-a},
\]
where $\gamma$ is the Euler-Mascheroni constant.
\end{proposition}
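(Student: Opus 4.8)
The plan is to use the exact solvability: by superposition, all the children born at one step form a single Poisson point process whose intensity depends on the configuration only through the scalar $Z:=\sum_i \e^{a x_i}$. Write $x^{(n)}_1\ge\cdots\ge x^{(n)}_N$ for the ordered positions at time $n$ and $Z_n=\sum_{i=1}^N\e^{a x^{(n)}_i}$. The newborn children at the next branching step form a Poisson point process on $\R$ with intensity $Z_n\e^{-y}\,\dy$; pushing it forward by $y\mapsto\e^{-y}$ yields a homogeneous Poisson process of rate $Z_n$ on $(0,\infty)$, whose atoms, ranked increasingly, are $\Gamma_1/Z_n<\Gamma_2/Z_n<\cdots$, where $\Gamma_k=E_1+\cdots+E_k$ is a sum of i.i.d.\ standard exponentials. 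Hence the $N$ selected particles are located at $x^{(n+1)}_k=\log Z_n-\log\Gamma^{(n+1)}_k$, $k=1,\dots,N$, the families $(\Gamma^{(n)}_k)_{k\le N}$ being independent across $n\ge 1$. In particular,
\[
  M_n=\log Z_{n-1}-\log E^{(n)}_1,\qquad m_n=\log Z_{n-1}-\log\Gamma^{(n)}_N,\qquad Z_n=Z_{n-1}^{a}\,W_n,
\]
where $W_n=\sum_{k=1}^N(\Gamma^{(n)}_k)^{-a}$, the $W_n$ are i.i.d., and $W_n$ is independent of $Z_{n-1}$.

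The recursion $\log Z_n=a\log Z_{n-1}+\log W_n$ is a linear autoregression. Since $a\in(0,1)$ and $\E|\log W_1|<\infty$ (from $-a\log\Gamma_1\le\log W_1\le\log N-a\log\Gamma_1$ and the integrability of $\log E_1$), iterating gives $\E[\log Z_n]=a^{n}\E[\log Z_0]+\tfrac{1-a^{n}}{1-a}\E[\log W_1]$; the $\mathrm{L}^1$ hypothesis on the initial data ensures $\E|\log Z_0|<\infty$ (because $a\max_i x^{(0)}_i\le\log Z_0\le\log N+a\max_i x^{(0)}_i$). Letting $n\to\infty$, and using $\E[-\log E_1]=\gamma$ and $\E[\log\Gamma_N]=\psi(N)$ (the digamma function, as $\Gamma_N$ is Gamma-distributed),
\[
  \lim_{n\to\infty}\E[M_n]=\frac{\E[\log W_1]}{1-a}+\gamma,\qquad \lim_{n\to\infty}\E[m_n]=\frac{\E[\log W_1]}{1-a}-\psi(N).
\]
It remains to take $N\to\infty$. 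Since $\psi(N)=\log N+o(1)$, the whole problem reduces to the asymptotics of $\E[\log W_1]=\E[\log W_N]$, where from now on $W_N=\sum_{k=1}^N\Gamma_k^{-a}$.

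Here I would first compute the mean, $\E[W_N]=\sum_{k=1}^N\Gamma(k-a)/\Gamma(k)=\sum_{k=1}^N k^{-a}\bigl(1+O(1/k)\bigr)=\tfrac{N^{1-a}}{1-a}+O(1)$, so that $\log\E[W_N]=(1-a)\log N-\log(1-a)+o(1)$, and then prove that $\log W_N$ concentrates at $\log\E[W_N]$ together with its expectation, i.e.\ $\E[\log W_N]-\log\E[W_N]\to0$. Jensen gives $\E[\log W_N]\le\log\E[W_N]$; for the reverse, $\log t\ge1-1/t$ gives $\E[\log W_N]-\log\E[W_N]\ge1-\E[W_N]\,\E[1/W_N]$, so it suffices that $\E[W_N]\,\E[1/W_N]\to1$. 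Now $\Gamma_k^{-a}/k^{-a}=(\Gamma_k/k)^{-a}\to1$ almost surely and $\sum_{k\le N}k^{-a}\to\infty$, so a Ces\`aro averaging argument gives $W_N/\sum_{k\le N}k^{-a}\to1$, hence $W_N/\E[W_N]\to1$ and $\E[W_N]/W_N\to1$ almost surely. Moreover $\Gamma_k\le\Gamma_N$ for $k\le N$ yields $W_N\ge N\Gamma_N^{-a}$, whence $\E[W_N]/W_N\le\big(\E[W_N]/N^{1-a}\big)(\Gamma_N/N)^{a}$, which is bounded in $\mathrm{L}^2$ since $\E[W_N]/N^{1-a}\to(1-a)^{-1}$ and $\sup_N\E[(\Gamma_N/N)^{2a}]<\infty$. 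Being uniformly integrable and a.s.\ convergent to $1$, $\E[W_N]/W_N$ converges in $\mathrm{L}^1$, i.e.\ $\E[W_N]\,\E[1/W_N]\to1$. Plugging $\E[\log W_N]=(1-a)\log N-\log(1-a)+o(1)$ into the two displayed limits gives $\lim_{N}\bigl(\lim_{n}\E[M_n]-\log N\bigr)=\gamma-\tfrac{\log(1-a)}{1-a}$ and $\lim_{N}\lim_{n}\E[m_n]=-\tfrac{\log(1-a)}{1-a}$, which is the claim.

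The main obstacle is the content of the last paragraph: passing from the transparent recursion to the precise constant requires $\E[\log W_N]$ up to $o(1)$, hence a genuine concentration statement for $W_N$ with convergence of expectations — the delicate point being to control the left tail of $W_N$ well enough to secure uniform integrability of $\E[W_N]/W_N$, for which the elementary inequality $W_N\ge N\Gamma_N^{-a}$ is exactly what is needed.
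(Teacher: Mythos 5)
Your proposal is correct, and while it follows the paper's overall skeleton for the first half, it resolves the key computation by a genuinely different method. Like the paper, you reduce everything to the ``equivalent position'' $\log Z_n$ (the paper's $X_n(\eq)$): the superposition of the offspring point processes is a single Poisson process with intensity $Z_n\e^{-y}\,\dy$, whence $\E(M_n)=\E(\log Z_{n-1})+\gamma$, $\E(m_n)=\E(\log Z_{n-1})-\psi(N)$, and the autoregression $\log Z_n=a\log Z_{n-1}+\log W_n$ with $\log W_n\egaldistr\xi_1$ i.i.d.; this is exactly the paper's perpetuity identity, and both arguments then hinge on showing $\E(\log W_N)=(1-a)\log N-\log(1-a)+o(1)$. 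Where you diverge is in establishing that asymptotic. The paper computes the Laplace transform $\Lambda(\lambda)=\E(\e^{-\lambda\xi_1})$ exactly, via a distributional identity from \cite{CoMa2017} expressing $\sum_j\e^{a\Delta_j}$ as $\e^{aZ_N}\sum_j\e^{aE_j}$ with i.i.d.\ exponentials, and then extracts the mean as $-(\log\Lambda)'(0)$ through a dominated-convergence analysis of the integral $J_N$. You instead compute the first moment $\E(W_N)=\sum_{k\le N}\Gamma(k-a)/\Gamma(k)\sim N^{1-a}/(1-a)$ directly and upgrade $\log\E(W_N)$ to $\E(\log W_N)$ by a concentration argument: Jensen in one direction, $\log t\ge 1-1/t$ in the other, with the law of large numbers giving $W_N/\E(W_N)\to1$ a.s.\ and the pointwise bound $W_N\ge N\Gamma_N^{-a}$ supplying the $\mathrm{L}^2$ bound (hence uniform integrability) needed to conclude $\E(W_N)\,\E(1/W_N)\to1$. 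All the individual steps check out, including the integrability of $\log Z_0$ from the $\mathrm{L}^1$ hypothesis and the Toeplitz averaging for the a.s.\ limit. Your route is more elementary and self-contained (no appeal to the auxiliary representation of \cite{CoMa2017}), at the cost of being purely asymptotic; the paper's route yields the exact Laplace transform of $\xi_1$ for finite $N$, which carries more information (e.g.\ higher moments) than the mean alone.
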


Proposition~\ref{prop_assymptotic_speed} shows that the cloud of particle in the $(N,a)$-exponential model is roughly of size $\log N$, which is typical in many branching selection particle systems.

The proofs of both Proposition~\ref{prop_assymptotic_speed} and Theorem~\ref{thm:main} rely on the observation that the distribution of the children at time $n+1$ is a Poisson point process with exponential intensity around the position of a unique fictional particle. This construction was introduced in \cite{BDMM07}, and further developed in \cite{CoMa2017}, from where we borrow the approach.

\paragraph*{Outline of the paper:} In the next section, we prove the two main results of the paper, namely, Theorem~\ref{thm:main} and Proposition~\ref{prop_assymptotic_speed}. In Section~\ref{sec:conjectures}, we define the \emph{branching Ornstein-Uhlenbeck processes} and discuss its relationship with the $(N,a)$-exponential model. 

\section{Proofs of the main results}

We start with the mathematical definition of the $(N,a)$-exponential model. Let $(\mathcal{P}_{n,j}, n \in \N, j \in \N)$ be an infinite array of i.i.d. copies of a Poisson point processes with intensity measure $\mathrm{e}^{-x} \mathrm{d}x$ and $X_1(0) > X_2(0) > \cdots > X_N(0)$ denote the ranked position of the particles at time $0$, that we shall assume distinct for the sake of simplicity. At each time $n$, the position of the $i$th largest individual is denoted by $X_n(i)$, and its children at time $n+1$ are distributed according to the point process $\mathcal{P}_{n+1,i}$ shifted by $a X_n(i)$. Then, we select the $N$-rightmost, new-born individuals to form the $(n+1)$th generation and denote by $X_{1}(n+1) > \cdots > X_N(n+1)$ their ranked positions. To keep track of the genealogical structure of the process, we shall label individuals with $A_{n+1}(k) \in \{1, \ldots N \}$ according the the index of their parents.

In other words, for each $n \in \N$ and $k \leq N$, we have that
\begin{enumerate}[i)]
\item \label{eq_position_n+1} $X_{n+1}(k)$ is the $k$th largest atom of the point process $\sum_{j=1}^N \sum_{p \in \mathcal{P}_{n+1,j}} \delta_{a X_n(j) + p}$,
\item $A_{n+1}(k)$ is the (unique) integer $j \leq N$ such that $X_{n+1}(k)-a X_n(j) \in \mathcal{P}_{n+1,j}$.
\end{enumerate}
It can be readily checked using standard properties of Poisson point processes that the above process is well defined for all $n$. Moreover, the law of $(X_{n+1}(k), k \leq N )$ from \eqref{eq_position_n+1} may be obtained as the $N$ largest points in a Poisson point process centered at  
\begin{equation}
  \label{eqn:defEquivalentPosition}
  X_n(\eq) := \log \left( \sum_{j=1}^N \mathrm{e}^{a X_n(j)} \right).
\end{equation}
Therefore, one may think of $X_n(\eq)$ as the ``equivalent'' position of the front, that is, a fictional particle that generates the entire front in generation $n+1$. In the next lemma we prove the above claim and characterize the (conditional) law of $A_n(\cdot)$.

\begin{lemma}
\label{lem:stationary}
The point processes $\sum_{j=1}^N \delta_{X_{n+1}(j) - X_n(\eq)}$, $n \in \N$ are i.i.d. with common distribution given by the $N$ rightmost points in a Poisson point process with intensity measure $\mathrm{e}^{-x} \mathrm{d}x$. Moreover, let $\mathcal{H} := \sigma \left( X_n(j), j \leq N, n \in \N \right)$ and $k_1, \ldots,k_N \in \Z_+$ such that $k_1 + \ldots + k_N =N$, then
\[
\P\left( A_{n+1}(j) = k_j;\, 1\leq j \leq N \middle| \mathcal{H} \right) = \prod_{j=1}^N \theta_n(k_j), 
\enskip \text{where} \qquad 
\theta_n(k) := \frac{\e^{aX_n(k)}}{\sum_{i=1}^N \e^{aX_n(i)}}.\]
\end{lemma}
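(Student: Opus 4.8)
The plan is to deduce both assertions from two standard facts about Poisson point processes: the superposition theorem, and the coloring (marking) theorem. The latter says that if $\Xi^{(1)},\dots,\Xi^{(N)}$ are independent Poisson point processes with intensities $\mu_1,\dots,\mu_N$ and one labels each atom of $\Xi:=\sum_i\Xi^{(i)}$ by the index $i$ of the component it belongs to, then $\Xi$ is a Poisson point process with intensity $\mu:=\sum_i\mu_i$ and, conditionally on $\Xi$, the labels of its atoms are independent, an atom at $x$ receiving label $i$ with probability $(\d\mu_i/\d\mu)(x)$. The only genuine computation in the proof is the remark that, for the intensities occurring here, this Radon--Nikodym derivative is \emph{constant} in the location variable.

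For the first assertion, set $\calF_n:=\sigma\big(X_0(\cdot),\,\mathcal{P}_{m,j}:m\le n,\ j\le N\big)$, so that $X_n(\cdot)$, hence $X_n(\eq)$, is $\calF_n$-measurable while the $\mathcal{P}_{n+1,j}$, $j\le N$, are i.i.d.\ copies of a Poisson point process with intensity $\e^{-x}\dx$ independent of $\calF_n$. Conditionally on $\calF_n$, the point process $\mathcal{P}_{n+1,j}$ translated by $aX_n(j)$ has intensity $\e^{aX_n(j)-x}\dx$, so by superposition the child point process $\sum_{j=1}^N\sum_{p\in\mathcal{P}_{n+1,j}}\delta_{aX_n(j)+p}$ is a Poisson point process with intensity $\big(\sum_{j=1}^N\e^{aX_n(j)}\big)\e^{-x}\dx=\e^{X_n(\eq)-x}\dx$; translating it by $-X_n(\eq)$ yields a Poisson point process with intensity $\e^{-x}\dx$, whose law does not depend on $\calF_n$. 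Hence the recentered child process is independent of $\calF_n$ and has that fixed law, and its $N$ rightmost atoms — which are exactly $\sum_{j=1}^N\delta_{X_{n+1}(j)-X_n(\eq)}$ — are a.s.\ well defined because $\int_t^{+\infty}\e^{-x}\dx<\infty$ for every $t$ while the total mass is infinite. As this random variable is $\calF_{n+1}$-measurable and $(\calF_n)_n$ is increasing, a one-line induction gives that $\big(\sum_{j=1}^N\delta_{X_{n+1}(j)-X_n(\eq)}\big)_{n\in\N}$ is i.i.d.\ with the stated law.

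For the second assertion, apply the coloring theorem to the same superposition, labelling each atom of the child point process by the index of the parent it descends from; reading these labels off the $N$ rightmost atoms gives precisely $A_{n+1}(1),\dots,A_{n+1}(N)$. Here $\mu_j=\e^{aX_n(j)-x}\dx$ and $\mu=\e^{X_n(\eq)-x}\dx$, so $(\d\mu_j/\d\mu)(x)=\e^{aX_n(j)}/\sum_{i=1}^N\e^{aX_n(i)}=\theta_n(j)$, independent of $x$. Consequently, conditionally on $\calF_n$, all the labels of the child point process are i.i.d.\ with law $\theta_n$ and independent of the atom locations; in particular, given $\calF_n$, the vector $(A_{n+1}(j))_{j\le N}$ is i.i.d.\ with law $\theta_n$ and independent of $X_{n+1}(\cdot)$.

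The delicate point is upgrading the conditioning from ``$\calF_n$ together with $X_{n+1}(\cdot)$'' to the full $\sigma$-algebra $\mathcal{H}$, which also records the positions at all times after $n+1$. To handle this, note that $X_m(\cdot)$ for $m\le n$ is $\calF_n$-measurable and that $\theta_n$ depends only on $X_n(\cdot)$, while for $m>n+1$ the position $X_m(\cdot)$ is a measurable function of $X_{n+1}(\cdot)$ and of the point processes $\mathcal{P}_{m',\cdot}$ with $n+1<m'\le m$, which are independent of $\calF_n$ and of $\mathcal{P}_{n+1,\cdot}$, hence — given $\calF_n$ — of the pair $\big(X_{n+1}(\cdot),(A_{n+1}(j))_{j\le N}\big)$. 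Combining these remarks with the previous paragraph, a short conditional-independence computation (write $\mathcal{H}$ as generated by $X_0(\cdot),\dots,X_M(\cdot)$, condition successively on $\calF_n$ and on $\mathcal{P}_{n+1,\cdot}$, and integrate out the fresh point processes) shows that, given $\mathcal{H}$, the vector $(A_{n+1}(j))_{j\le N}$ is still i.i.d.\ with law $\theta_n$, which is the announced identity. I expect this $\sigma$-algebra bookkeeping, rather than any Poissonian ingredient, to be the only point requiring care.
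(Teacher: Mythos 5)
Your proof is correct and follows essentially the same route as the paper's: superposition of the shifted Poisson processes to identify the recentered child process as a Poisson point process with intensity $\e^{-x}\,\dx$, and the observation that the ratio of the $j$th intensity to the total intensity equals $\theta_n(j)$ independently of the location, which is exactly the computation the paper performs for $\P(A_{n+1}(j)=i\mid\mathcal{H})$. The only difference is one of rigor, in your favour: by invoking the marking/coloring theorem you obtain the joint conditional independence of the labels (which the paper merely asserts), and you explicitly address the fact that $\mathcal{H}$ also records positions at times after $n+1$, a point the paper's sketch passes over in silence.
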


\begin{proof}
This result is obtained with a reasoning similar to \cite[Proposition 1.3 and Lemma 1.6]{CoMa2017}, we will therefore only outline the main parts of the proof and omit the technical details.  

First, using the superposition property of Poisson point processes, we obtain that
\[
\sum_{j=1}^N \sum\limits_{p \in \mathcal{P}_{n+1,j}} \delta_{a X_n(j) + p}
\egaldistr 
\sum_{p \in \mathcal{P}} \delta_{X_n(\eq) + p},
\]
where $\mathcal{P}$ is a Poisson point process with intensity measure $\e^{-x}\mathrm{d}x$.

Next, we note that for all $i,j,n \in \N$ 
\begin{align*}
\P\left( A_{n+1}(j) = i  \middle| \, \mathcal{H} \, \right) 
& =  \P\left( X_{n+1}(j)-a X_n(i) \in \mathcal{P}_{n+1,i} \middle| \, \mathcal{H}  \,\right) \\
& = \frac{\e^{-X_{n+1}(j)-a X_n(j)}}{\sum_{k = 1}^N e^{-(X_{n+1}(j) - a X_n(k))}} = \frac{\e^{a X_n(j)}}{\sum_{k=1}^N \e^{a X_n(k)}}.
\end{align*}
Moreover, the $ A_{n+1}(j)$'s are (conditionally) independent, which concludes the proof.
\end{proof}

Using Lemma \ref{lem:stationary}, we observe that for any $n \in \N$, we have the following recursion equation:
\[
 X_{n+1}(\eq) - aX_n(\eq) = \log \left( \sum_{j=1}^N \e^{a (X_{n+1}(j) - X_n(\eq))} \right) \egaldistr  \log \left( \sum_{j=1}^N \e^{a \Delta_j} \right),
\]
where $(\Delta_j, j \in \N)$ are the ranked atoms of a Poisson point process with intensity measure $\e^{-x}\mathrm{d}x$. As a result, writing $(\xi_j, \, j \in \N)$ for an i.i.d. sequence of random variables with the same law as $X_1(\eq)-a X_0(\eq)$, we have
\begin{equation}
  \label{eqn:perpetuity}
  X_n(\eq) \egaldistr  \sum_{j=0}^{n-1} a^j \xi_{n-j} + a^n X_0(\eq).
\end{equation}
In other words, $(X_n(\eq), n \in \N)$ behaves as a perpetuity process.

\begin{proof}[Proof of Proposition~\ref{prop_assymptotic_speed}]
We recall that $M_n$ and $m_n$ denote respectively the positions of the rightmost and the leftmost particles at time $n$. Using Lemma~\ref{lem:stationary}, we observe that
\[
  M_n - X_{n-1}(\eq) \egaldistr p_1 \quad \text{and} \quad m_{n} - X_{n-1}(\eq) \egaldistr p_N,
\]
where $p_j$ is the $j$th largest atom in a Poisson point process with intensity $\e^{-x} \mathrm{d}x$. Hence, we have by standard Poisson computations that
\begin{equation}
  \label{eqn:liminfSup}
  \E(M_n) =\E(X_{n-1}(\eq)) + \gamma \quad \text{and} \quad \E(m_n) = \E(X_{n-1}(\eq)) - \psi(N)
\end{equation}
where $\psi(N):= \Gamma'(N)/\Gamma(N)$ is the digamma function, which satisfies $\psi(N) = \log N + \mathcal{O} (N^{-1})$ as $N\to \infty$. It is therefore enough to compute the asymptotic behaviour of $\E(X_{n}(\eq))$ as $n \to \infty$ and then $N \to \infty$ to conclude the proof.

For what follows in the proof, we shall assume that $a<1$ and $\E(X_0(\eq))<\infty$. Thus, taking the expected value in~\eqref{eqn:perpetuity}, one gets
\begin{equation}
  \label{eqn:limMed}
  \E(X_n(\eq)) = a^n \E(X_0(\eq)) + \frac{1- a^{n}}{1-a} \E(\xi_1),
 \quad\text{ yielding } 
 \lim_{n \to \infty} \E(X_n(\eq)) = \frac{1}{1-a} \E(\xi_1).
\end{equation}
It remains therefore to compute $\E(\xi_1)$. For this propose, we will first compute the Laplace transform $\Lambda(\lambda)$ of $\xi_1$, then recover its mean via $\E(\xi_1) = -(\log \Lambda)' (0)$.

By \cite[Proposition 4.2]{CoMa2017}, we remark that
\[
\e^{\xi_1} = \sum_{j=1}^N \e^{a \Delta_j} \egaldistr \e^{a Z_n} \sum_{j=1}^n \e^{a E_j},
\]
where $(E_j, j \geq 1)$ are i.i.d. exponential random variables with parameter $1$ and $Z_N$ is an independent random variable whose distribution has density $(N!)^{-1} \e^{-(N+1)x - \e^{-x}}$ with respect to the Lebesgue measure. Therefore, we have
\[
  \Lambda(\lambda) := \E\left( \e^{- \lambda \xi_1} \right) = \E\left( \left( \sum_{j=1}^N \e^{a E_j} \right)^{-\lambda} \right) \E\left( \e^{-\lambda a Z_N } \right).
\]
By direct computations, we obtain $\E\left( \e^{- \lambda a Z_N}\right) = \Gamma(N+1+a\lambda)/\Gamma(N+1)$ and
\begin{equation}
  \label{eq_I_1234321}
  \E\left( \left( \sum_{j=1}^N \e^{aE_j}\right)^{-\lambda} \right) = \frac{1}{\Gamma(\lambda)} \int_0^\infty t^{\lambda - 1} \E\left( e^{-t \sum_{j=1}^N \e^{aE_j}} \right) \mathrm{d}t = \frac{1}{\Gamma(\lambda)} \int_0^\infty t^{\lambda - 1} I(t)^N \mathrm{d}t,
\end{equation}
where $I(t)$ is the function defined by
\[
  I(t) := \E\left( \e^{-t \e^{a E_1}} \right) = \int_0^\infty \e^{-x} \e^{-t \e^{ax}} \mathrm{d}x = \frac{t^{1/a}}{a} \int_t^\infty u^{-(1+1/a)} \e^{-u } \mathrm{d}u.
\]

Making the change of variable $t = x/N$ in the right-hand side of \eqref{eq_I_1234321} one obtains
\[
 \E\left( \left( \sum_{j=1}^N \e^{aE_j}\right)^{-\lambda} \right) = 
 \frac{1}{N^\lambda} \int_0^\infty I(x/N)^N \frac{x^{\lambda-1} \mathrm{d}x }{\Gamma(\lambda)}   =: \frac{J_N(\lambda)}{N^\lambda},
\]
where $J_N(\lambda)$ is a smooth function such that $J_N(0) =1$. Collecting all pieces, we obtain that $\Lambda(\lambda) = J_N(\lambda) \frac{\Gamma(N+a\lambda+1)}{N^\lambda \Gamma(N +1)}$, which yields
\begin{equation}
  \label{eqn:expressionXi}
  \E\left( \xi_1 \right) = -\left( \log \Lambda \right)'(0) = \log N - a \psi(N+1) - J_N'(0),
\end{equation}
where we recall that $\psi(x) = \Gamma'(x) /\Gamma(x)$ is the digamma function and $(\log J_N)'(0)= \frac{J_N'(0)}{J_N(0)} = J_N'(0)$.

To compute $J_N'(0)$, we will take the $\lambda \to 0$ limit of 
\begin{equation}
  \label{integral:dominated_convergence}
  \frac{J_N(\lambda) - 1}{\lambda} = \frac{1}{\lambda \Gamma(\lambda)} \int_0^\infty x^{\lambda - 1} (I(x/N)^N - \e^{-x}) \mathrm{d}x = \frac{1}{\Gamma(\lambda + 1)} \int_0^\infty  x^{\lambda - 1} (I(x/N)^N - \e^{-x}) \mathrm{d}x.
\end{equation}
By definition, $I(t) \leq \e^{-t}$ for all $t \in \R_+$ which implies 
\begin{equation}
\label{eq_estimates_x>0}
 \big| x^{\lambda - 1} (I(x/N)^N - \e^{-x}) \big| \leq 2 \e^{-x}
 \qquad \text{for all $x\geq 1$ and $\lambda \in (0,1)$.}
\end{equation}
On the other hand, we observe that for all $t \in (0,1]$, we have
\begin{align*}
  I(t) &= \frac{t^{1/a}}{a} \left(\int_t^\infty u^{-(1+1/a)}(1 - u) \mathrm{d}u + \int_t^\infty u^{-(1+1/a)} (\e^{-u} - 1 + u) \mathrm{d}u \right) \\
  &= \frac{t^{1/a}}{a} \left( at^{-1/a} + \frac{t^{1 - 1/a}}{1 - 1/a} + \int_t^\infty u^{-(1+1/a)} (\e^{-u} - 1 + u) \mathrm{d}u \right)\\
  &= 1 - \frac{t}{1-a} + \mathcal{O}(t^{b}) \quad \text{as } t \to 0,
\end{align*}
for some $b>1$. Indeed, we have $t^{1/a} \int_1^\infty u^{-(1+1/a)} (\e^{-u} - 1 + u) \mathrm{d}u = \mathcal{O}(t^{1/a})$ as $t \to 0$, and relying on the fact that $(\e^{-u} - 1 + u) = \mathcal{O}(u^2)$ for all $u \leq 1$ one gets
\[
  t^{1/a} \int_t^1 u^{-(1+1/a)} (\e^{-u} - 1 + u) \mathrm{d}u = 
  \begin{cases}
    \mathcal{O}(t^{1/a}) & \text{ if } a > 1/2; \\
    \mathcal{O}(t^{1/a}) \log t & \text{ if } a = 1/2;\\
    \mathcal{O}(t^2) & \text{ if } a < 1/2.
  \end{cases}
\]
As a result, we obtain $|I_N(x/N)^N - \e^{-x}| \leq   \frac{a}{a-1}  x + C \left(\tfrac{x^b}{N^{b-1}} + x^2 \right)$ for all $x \leq 1$, where $C > 0$ is a constant not depending on $N$. Thus, one can find a (possibly larger) constant $\tilde{C}>0$ such that
\begin{equation}
\label{eq_estimates_x<0}
\big| x^{\lambda - 1} (I(x/N)^N - \e^{-x}) \big| \leq \tilde{C},
\qquad \text{for all $x < 1$.}
\end{equation}
Thanks to \eqref{eq_estimates_x>0} and \eqref{eq_estimates_x<0}, we can apply dominated convergence in \eqref{integral:dominated_convergence}, to obtain
\[
  J_N'(0) = \lim_{\lambda \to \infty}   \frac{J_N(\lambda) - 1}{\lambda} = \int_0^\infty x^{-1} (I_N(x/N)^N-\e^{-x}) \mathrm{d}x.
\]

Now, we plug the above in \eqref{eqn:expressionXi} and use the fact that $\lim_{N \to \infty} \psi(N+1) - \log N = 0$ to get
\begin{equation}
  \label{eqn:varExpressionXi}
  \E(\xi_1) = (1-a)\log N + \int_0^\infty x^{-1} (\e^{-x}-I_N(x/N)^N) \mathrm{d}x + o(1) \quad \text{as } N \to \infty.
\end{equation}
Finally, we notice that $I_N(x/N)^N$ tends to $\e^{-\frac{x}{1-a}}$ as $N\to \infty$. Therefore, we can rely again on \eqref{eq_estimates_x>0} and \eqref{eq_estimates_x<0}, to apply dominated convergence thereby obtaining  
\begin{align*}
  \lim_{N \to \infty} \int_0^\infty x^{-1}(\e^{-x}-I_N(x/N)^N)  \mathrm{d}x
  &= \int_0^\infty x^{-1} (\e^{-x} - \e^{- \frac{x}{1-a}})  \mathrm{d}x\\
  &= \int_0^\infty \int_1^{\frac{1}{1-a}} \e^{-ux} \mathrm{d}u \mathrm{d}x
  = \int_1^{\frac{1}{1-a}} \frac{\mathrm{d}u}{u}= -\log (1 - a),
\end{align*}
which, in sight of \eqref{eqn:liminfSup}, \eqref{eqn:limMed} and \eqref{eqn:varExpressionXi} concludes the proof.
\end{proof}

We now turn to the proof of Theorem \ref{thm:main}. We first use Lemma~\ref{lem:stationary} to construct a bi-infinite version $(\bar{X}_n,\bar{A}_n, n \in \Z)$ of the $(N,a)$-exponential model.
\begin{remark}
Let $(\mathcal{P}_n, n \in \Z)$ be i.i.d. Poisson point processes with intensity $\e^{-x} \mathrm{d}x$ and write $(p_{n,j}, j \in \N)$ for the sequence of atoms in $\mathcal{P}_n$ ranked in the decreasing order. We set $\bar{X}_0(\eq) = 0$, and for $n \in \Z \setminus \{ 0 \}$
\[
\bar{X}_n(\eq) =
\begin{cases}
  \sum_{j=1}^n \zeta_j a^{n-j} & \text{if $n \geq 1$}; \\
  -\sum_{j=-1}^n \zeta_{j} a^{n-j} & \text{if $n \leq 1$}, \\
\end{cases} 
\]
where $\zeta_n : = \log  \sum_{j=1}^N \e^{a p_{n,j}}$. The position of the $N$ particles $\bar{X}_n(1) > \ldots > \bar{X}_n(N)$ in generation $n \in \Z$ is then defined as
\[
\bar{X}_n(k) = \bar{X}_{n-1}(\eq) + p_{n,k}; \qquad k=1,\ldots N.
\] 
Next, we assign labels to the particles as follows: the particle $\bar{X}_n(k)$ bears the label $\bar{A}_n (k) \in \{1, \ldots N\}$ such that
\[
\textstyle
\P( \bar{A}_n (k) =j   \mid \mathcal{P}_{n-1}) = \e^{a p_{n-1, j}} \big/ \sum_{i=1}^{N} \e^{a p_{n-1, i}}.
\] 
\end{remark}

In the bi-infinite version $(\bar{X},\bar{A})$ of the $(N,a)$-exponential model, the family $(\bar{A}_n , n \in \Z)$ is i.i.d. We can now use the $\bar{A}$'s to reconstruct the \emph{ancestral partition process} of the process as follows: for every $n \in \N$, we say that $i$ and $j$ belong to the same block of $\Pi^N_n$ if
\[
   \bar{A}_{-n}(\bar{A}_{-(n-1)}(\ldots \bar{A}_{-1}(i))) = \bar{A}_{-n}(\bar{A}_{-(n-1)}(\ldots \bar{A}_{-1}(j))).
\]
This allows us to express the law of $\Pi^N$ in terms of a population dynamics with independent generation. Precisely, let
\[
  \theta_{n}(j) = \frac{\e^{a X_{-n}(j)}}{\sum_{k=1}^N \e^{a X_{-n}(k)}} = \frac{ \e^{a p_{n-1, j}} }{ \sum_{i=1}^{N} \e^{a p_{n-1, i}}} .
\]
Then conditionally on $(\theta_n(j), -n \in \N, j \leq N)$, each individual at generation $n \leq -1$ chooses its parent at generation $n-1$ independently at random, selecting the parent $j$ with probability $\theta_{n-1}(j)$. This is often called a Cannings model defined by a multinomial distribution with $N$ independent trials and (random) probabilities outcomes $(\theta_1(j), j \leq N)$ (see \cite[Section 2.2.3]{Ber} for a definition of such processes).

Thanks to this last observation, we finally prove Theorem \ref{thm:main}.
\begin{proof}[Proof of Theorem \ref{thm:main}]
Thinking of $\Pi^N$ as the \emph{ancestral partition process} of a Canning model, Lemma~\ref{lem:stationary} together with \cite[Proposition 4.2]{CoMa2017} yields 
\[
\{\theta_1(i), i \leq N\}:=  \left\{\frac{\e^{a\bar{X}_{-1}(i)}}{\sum_{k=1}^N \e^{a\bar{X}_{-1}(k)}}, i \leq N\right\} 
\egaldistr \left\{\frac{e^{aE_{i}}}{\sum_{k=1}^N e^{aE_{k}}}, i \leq N \right\},
\]
where  $(E_j, j \in \N)$ i.i.d. exponential random variables with mean $1$. Moreover, for any $y \geq 1$, we have $\P(\e^{a E_j} \geq y) = y^{-1/a}$. Therefore, applying \cite[Theorem 1.2]{Cor16}, we conclude the proof of Theorem \ref{thm:main}.
\end{proof}

\section{The branching Ornstein-Uhlenbeck processes}\label{sec:conjectures}

In this section, we draw a parallel between branching Ornstein-Uhlenbeck processes and the $(N,a)$-exponential model we have introduced here. We first recall that an Ornstein-Uhlenbeck process is a continuous time diffusion that solves the stochastic differential equation:
\begin{equation}
  \label{eqn:sde}
  \mathrm{d}X_t = - \mu X_t \mathrm{d}t + \sigma \mathrm{d}W_t,
\end{equation}
where $\mu \geq 0$ is the \emph{pulling strength} of the process, $\sigma>0$ is the diffusion coefficient and $W$ is a standard Brownian motion. Therefore, the branching Ornstein-Uhlenbeck process with parameters $(\beta,\mu,\sigma)$ is a continuous-time branching process, whose underlying motion is governed by $X_t$. It starts with a unique particle at $0$. This particle (or individual) evolves according to an Ornstein-Uhlenbeck process with parameters $(\mu, \sigma)$ for an independent exponential time with mean $\beta$. At that time, the individual splits into two children, that start independent branching Ornstein-Uhlenbeck processes from the position of their parents. Up to a space-time linear transforms, we may assume without loss of generality that $\beta = \sigma= 1$.

To the best of our knowledge, there is a reduced number of rigorous results about this branching process. The authors in \cite{AdM14,AdM15} study the behaviour of particles in the bulk and show that a CLT type result holds when the branching rate is weak (or analogously the pulling force is small), whereas a completely different behaviour takes place in the \emph{large branching rate} case. In \cite{Shi17}, a branching Ornstein-Uhlenbeck type process with infinite branching rate is introduced. Nevertheless, the behaviour of extreme particles and the genealogy associated to the process with selection remain open.
We believe there exists a straight connection between branching Ornstein-Uhlenbeck processes with selection and the $(N,a)$-exponential models. In both models particles are subjected to a pulling strength that depends linearly on the position of particles. In view of this connection and the results from the previous section, one can expect that the genealogy of branching Ornstein-Uhlenbeck processes with selection has non-trivial behaviour. 

As noticed in \cite{BDMM07}, the time step $n \mapsto n+1$ in the $(N,1)$-exponential model is associated to a $\mathcal{O}((\log N)^2)$ time step for the associated $N$-branching random walk. Hence, as the genealogical tree of the $(N,1)$-exponential model converges in the $(\log N)$ scale, the genealogical tree of the $N$-branching random walk should converge in the $(\log N)^3$ scale. These conjecture were verified in \cite{BBS13}  to the case of branching Brownian motion with quasi-critical absorption.  Similarly, a time step in the $(N,a)$-exponential model should correspond to a time interval of the order $(\log N)^2$ in the evolution of the branching Ornstein-Uhlenbeck process with selection. If this connection is precise, it is reasonable to choose a pulling force $\mu_N$ depending on the size of the system. A reasonable choice would then be $\gamma/(\log N)^2$ with $\gamma > 0$ a parameter. We notice that an Ornstein-Uhlenbeck process $X$ with pulling strength $\mu_N$ and starting position $x$ satisfies $\E(X_{(\log N)^2})=x \e^{-\gamma}$.

We ran a few simulations that reinforce the above heuristics. We considered discrete-time/space branching-selection particles systems which mimic branching Ornstein-Uhlenbeck processes. The results are displayed in Figure~\ref{figure_simu}. We observe that when the pulling strength has the order $(\log N)^{-2}$, then the average coalescent time of two individuals chosen uniformly at random seems to grow polynomial with $N$. This type of behaviour is compatible with Beta coalescent. This leads us to the following conjecture.

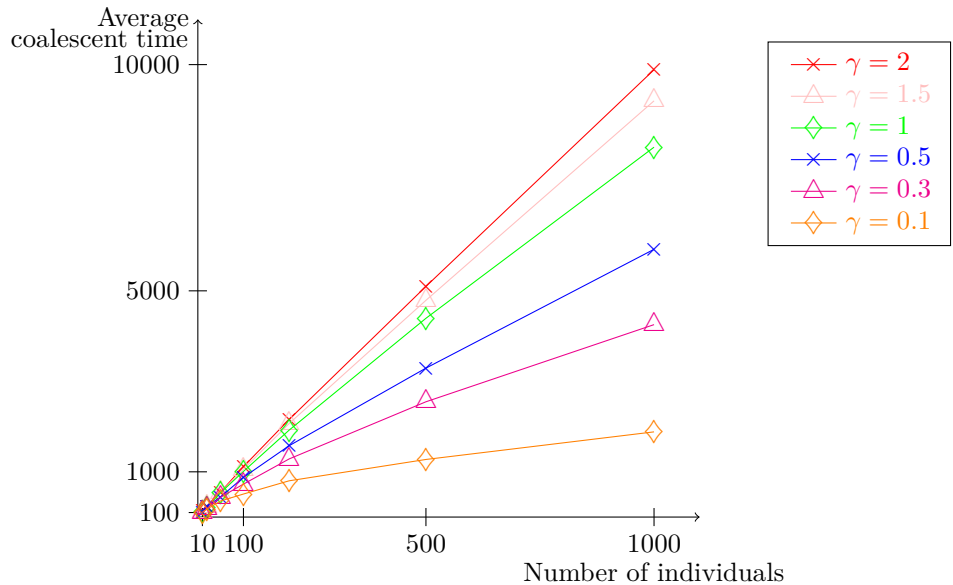
\begin{figure} \label{figure_simu}
\centering
\begin{tikzpicture}[xscale=0.06,yscale=0.06]
\draw [->] (0,0) -- (0,110) node[left] {Average};
\draw (0,106) node[left] {coalescent time};
\draw [->] (0,0) -- (110,0);
\draw (100,-8) node[below] {Number of individuals};
\draw (1,-2) node[below] {10} -- (1,2);
\draw (10,-2) node[below] {100} -- (10,2);
\draw (50,-2) node[below] {500} -- (50,2);
\draw (100,-2) node[below] {1000} -- (100,2);

\draw (-2,1) node[left] {100} -- (2,1);
\draw (-2,10) node[left] {1000} -- (2,10);
\draw (-2,50) node[left] {5000} -- (2,50);
\draw (-2,100) node[left] {10000} -- (2,100);

\draw [color=red] (130,100) -- (135,100) node {$\times$} -- (140,100) node[right] {$\gamma=2$};
\draw [color=pink] (130,93) -- (135,93) node {$\triangle$}-- (140,93) node[right] {$\gamma=1.5$};
\draw [color=green] (130,86) -- (135,86) node {$\diamondsuit$}-- (140,86) node[right] {$\gamma=1$};
\draw [color=blue] (130,79) -- (135,79) node {$\times$}-- (140,79) node[right] {$\gamma=0.5$};
\draw [color=magenta] (130,72) -- (135,72) node {$\triangle$}-- (140,72) node[right] {$\gamma=0.3$};
\draw [color=orange] (130,65) -- (135,65) node {$\diamondsuit$}-- (140,65) node[right] {$\gamma=0.1$};

\draw (125,105) -- (165,105) -- (165,60) -- (125,60) -- cycle;

\draw [color=red] (1,1.40688) node {$\times$} -- (2,2.35171) node {$\times$} -- (5,5.56506) node {$\times$} -- (10, 11.13705) node {$\times$} -- (20,21.5576) node {$\times$} -- (50,50.97692) node {$\times$} -- (100,98.86282) node {$\times$}; 
\draw [color = pink] (1,1.16) node {$\triangle$} -- (2,2.44) node {$\triangle$} -- (5,5.52) node {$\triangle$} -- (10,10.56) node {$\triangle$} -- (20,20.66) node {$\triangle$} -- (50,47.90) node {$\triangle$} -- (100,91.99) node {$\triangle$}; 
\draw [color=green] (1,1.25) node {$\diamondsuit$} -- (2,2.12) node {$\diamondsuit$} -- (5,5.4) node {$\diamondsuit$} -- (10,10.09) node {$\diamondsuit$} -- (20,19.20) node {$\diamondsuit$} -- (50,43.93) node {$\diamondsuit$} -- (100,81.75) node {$\diamondsuit$}; 
\draw [color = blue] (1,1.01) node {$\times$} -- (2,2.37) node {$\times$} -- (5,4.46) node {$\times$} -- (10,8.78) node {$\times$} -- (20,15.76) node {$\times$} -- (50,32.92) node {$\times$} -- (100,59.18) node {$\times$}; 
\draw [color = magenta] (1,0.97) node {$\triangle$} -- (2,1.89) node {$\triangle$} -- (5,4.41) node {$\triangle$} -- (10,7.28) node {$\triangle$} -- (20,12.84) node {$\triangle$} -- (50,25.48) node {$\triangle$} -- (100,42.53) node {$\triangle$}; 
\draw [color=orange] (1,1.1) node {$\diamondsuit$} -- (2,1.75) node {$\diamondsuit$} -- (5,3.56) node {$\diamondsuit$} -- (10,5.09) node {$\diamondsuit$} -- (20,8.01) node {$\diamondsuit$} -- (50,12.73) node {$\diamondsuit$} -- (100,18.83) node {$\diamondsuit$}; 

\end{tikzpicture}
\caption{Average age of the most recent common ancestor of two individuals selected at random}
\end{figure}

\begin{conjecture}
Let $\gamma > 0$ and $N \in \N$, there exists a sequence $(\rho_N)$ which is $c_\gamma$-regularly varying, with $c_\gamma \in [0,1]$ such that $(\Pi^N_\floor{\rho_N t},t \geq 0)$ converges toward a Beta($1-c_\gamma,1+c_\gamma)$-coalescent.
\end{conjecture}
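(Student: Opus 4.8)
Since this statement is a conjecture, we limit ourselves to describing a possible route to its proof, following the strategy behind Theorem~\ref{thm:main}.

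The plan is to reduce the genealogy of the branching Ornstein-Uhlenbeck process with selection to that of a Cannings model whose reproduction law has a regularly varying tail, and then to invoke the invariance principle of M\"ohle and Sagitov \cite{MoS01} and Schweinsberg \cite{Sch03}, in the form \cite[Theorem 1.2]{Cor16} used above. The first step is to identify the analogues of the equivalent position $X_n(\eq)$ of \eqref{eqn:defEquivalentPosition} and of the quasi-stationary front. Run with pulling strength $\mu_N = \gamma/(\log N)^2$ and observed on its natural time unit $(\log N)^2$, the process should --- as in the Brunet-Derrida picture, and as proved for branching Brownian motion with quasi-critical absorption in \cite{BBS13} --- settle, after each selection step, into a travelling cloud of $N$ particles of width $\Theta(\log N)$ around some front $L_N$, with the reproductive success over one time unit dominated by rare \emph{anomalously good} particles: a particle that finds itself far ahead of the front founds a macroscopic fraction of the next generation. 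Making this rigorous --- proving existence of the quasi-stationary shape, controlling $L_N$ (in particular ruling out that the cloud escapes to $\pm\infty$), and estimating the probability that a tagged particle founds a fraction at least $x$ of the population one time unit later --- is the first and hardest step: unlike in the exponential model, there is no superposition identity collapsing the front to a single fictional particle, and the analysis has to be carried out by hand, presumably along the lines of \cite{BBS13} and the heuristics of \cite{BDMM07}.

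Granting such a description, the second step is to show that the effective (random, unnormalised) reproductive weights $(w^N_j)_{j\leq N}$ of the associated Cannings model have a regularly varying tail, $\P(w^N_1 \geq y) \approx y^{-\alpha_\gamma}$ for an exponent $\alpha_\gamma \in [1,2]$, the role of the pulling force being precisely to push $\alpha_\gamma$ away from the Bolthausen-Sznitman value $1$ reached for $\mu_N = 0$ and towards the Kingman value $2$ as $\gamma \to \infty$, since a particle far ahead of the front is dragged back by the drift $-\mu_N x$, which damps the heavy tail of its reproductive success. Writing $c_\gamma := \alpha_\gamma - 1 \in [0,1]$, the criterion for convergence of ancestral partitions of Cannings models with regularly varying offspring tails then gives that $(\Pi^N_{\floor{\rho_N t}}, t \geq 0)$ converges to the $\Lambda$-coalescent with $\Lambda = \text{Beta}(2-\alpha_\gamma, \alpha_\gamma) = \text{Beta}(1-c_\gamma, 1+c_\gamma)$, where $\rho_N$ is the time scale on which two fixed lineages coalesce at order-one rate; a routine computation with the Beta$(2-\alpha_\gamma,\alpha_\gamma)$ rates --- compare the scalings $N$, $N^{(1-a)/a}$ and $\log N$ in Theorem~\ref{thm:main} --- shows that $\rho_N$ is then regularly varying of index $c_\gamma = \alpha_\gamma - 1 \in [0,1]$, as claimed.

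The whole difficulty is concentrated in the first step: the branching Ornstein-Uhlenbeck process with selection is not exactly solvable, so a genuine quantitative study of its extremal process and of its founder events under the weak pulling $\mu_N = \gamma/(\log N)^2$ cannot be avoided. In particular, determining the value of $c_\gamma$ --- let alone its monotonicity in $\gamma$ and its boundary behaviour ($c_\gamma \to 0$ as $\gamma \to 0$, recovering the branching Brownian motion heuristics, and $c_\gamma \to 1$ as $\gamma \to \infty$, a Kingman-type regime) --- would require sharp control of how the Ornstein-Uhlenbeck drift distorts the tail of the displacement that a tip particle must realise in order to be anomalously good, which is why we only state the result as a conjecture.
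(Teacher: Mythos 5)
This statement is left as a conjecture in the paper, which offers no proof --- only the heuristic correspondence with the $(N,(1+c_\gamma)^{-1})$-exponential model, the $(\log N)^2$ time-step dictionary, and numerical simulations --- and your roadmap (reduce to a Cannings model whose reproductive weights have regularly varying tail index $\alpha_\gamma=1+c_\gamma$, then apply \cite[Theorem 1.2]{Cor16} exactly as in the proof of Theorem~\ref{thm:main}) is the same strategy the paper's discussion suggests, with a parameter matching consistent with the paper's. You also correctly locate the genuine obstruction: the branching Ornstein--Uhlenbeck process with selection admits no analogue of the equivalent position \eqref{eqn:defEquivalentPosition} collapsing a generation to a single Poisson point process, so the quantitative description of the travelling front and of the founder events would have to be carried out by hand, as in \cite{BBS13}.
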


Roughly speaking, this conjectures state that a branching Ornstein-Uhlenbeck process with pulling strength $\gamma (\log N)^{-2}$ and selection of the $N$ rightmost individuals can be associated to a $(N,(c_\gamma+1)^{-1})$-exponential model. Even though we were not able to push the simulations far enough to guess the function $c_\gamma$, it is worth noting that it seems to be close to 1 for $\gamma > 1$, and that $c_{0.1} \approx 0.5$.

\paragraph*{Acknowledgments:} 
We would like to thank Julien Berestycki and \'Eric Brunet for their comments, advices and interest in this work.

\noindent
The work of A.C. is supported by the Swiss National Science Foundation 200021\underline{{ }{ }}163170.

\bibliographystyle{plain}

\bibliography{biblio}

\end{document}